\newcommand{\rd}{{\rm d}}
\newcommand{\e}{{\rm e}}
\newcommand{\Ran}{\mathop{\rm Ran}}
\newcommand{\R}{{\mathbb R}}
\newcommand\beq{\begin{equation}}
\newcommand\eeq{\end{equation}}
\newcommand{\beqnt}{\begin{equation*}}
\newcommand{\eeqnt}{\end{equation*}}
\newcommand\re{\mathrm{Re}\,}
\newcommand\im{\mathrm{Im}\,}
\newcommand\I{\mathrm{i}}
\DeclareMathOperator{\tr}{Tr}
\newtheorem{theorem}{Theorem}[section]
\newtheorem{lemma}[theorem]{Lemma}
\begin{document}

\title[Resolvent estimates in Schatten spaces]{Resolvent estimates in Schatten spaces for Laplace-Beltrami operators on compact manifolds}

% \keywords{Uniform resolvent estimates}
% \subjclass[2020]{58J50, 35P15}
 \author{Jean-Claude Cuenin}
 \address{Department of Mathematical Sciences, Loughborough University, Loughborough,
 Leicestershire, LE11 3TU United Kingdom}
 \email{J.Cuenin@lboro.ac.uk}
 
 %\date{\today}
 
\begin{abstract}
We prove resolvent estimates in Schatten spaces for Laplace-Beltrami operators on compact manifolds at the critical exponent. Our proof only uses known bounds for the Hadamard parametrix. 
\end{abstract}

\maketitle

%    Text of article.
\section{Introduction and main result}
Let $(M,g)$ be a compact boundaryless Riemannian manifold of dimension $n\geq 2$, with smooth metric $g$, and consider the negative Laplace--Beltrami operator $\Delta_g$ on $M$. The aim of this note is to prove a Schatten space version of the following resolvent estimate,
\begin{align}\label{Frank and Schimmer}
\|(-\Delta_g-z)^{-1}\|_{L^{q'}(M)\to L^{q}(M)}\leq C_{\delta} |z|^{\sigma(q)-\frac{1}{2}},
\end{align}
where $\im\sqrt{z}\geq \delta$ and
\begin{align}\label{def. sigma(q)}
\sigma(q)=\max\bigl( \, n(\tfrac12-\tfrac1q)-\tfrac12, \, \tfrac{n-1}2(\tfrac12-\tfrac1q)\, \bigr).
\end{align}
Here, $\delta>0$ is an arbitrary but fixed constant.
For $n\geq 3$, $q=2n/(n-2)$, inequality \eqref{Frank and Schimmer} was proved by Dos Santos Ferreira--Kenig--Salo \cite{MR3200351}. 
For the torus this result is due to Shen \cite{MR2366961}. Bourgain--Shao--Sogge--Yao \cite{BourgainShaoSoggeEtAl2015} obtained improved bounds on negatively curved manifolds and on the torus. Frank--Schimmer \cite{MR3620715}, and, independently, Burq--Dos Santos Ferreira--Krupchyk \cite{MR3848231}, proved \eqref{Frank and Schimmer} for $n\geq 2$ at the critical exponent $q_n:=2(n+1)/(n-1)$. 
By elliptic estimates, Sobolev embedding and interpolation with the trivial $L^2\to L^2$ bound, the estimate at the critical exponent implies that \eqref{Frank and Schimmer} holds for all other exponents $2\leq q\leq 2n/(n-2)$. 
We will therefore only consider the critical exponent in the following.  

By duality, the inequality \eqref{Frank and Schimmer} at the critical exponent is equivalent to 
\begin{align}\label{Holder}
\|W_1(-\Delta_g-z)^{-1}W_2\|_{L^{2}(M)\to L^{2}(M)}\leq C_{\delta} |z|^{\frac{1}{q_n}-\frac{1}{2}} \|W_1\|_{L^{n+1}(M)}\|W_2\|_{L^{n+1}(M)},
\end{align}
where we used that $\sigma(q_n)=1/q_n$.
Our main result is the following upgrade of \eqref{Holder} to a stronger bound that replaces the $L^{2}(M)\to L^{2}(M)$ operator norm by a Schatten norm.
\begin{theorem}\label{thm 1}
Let $(M,g)$ be a compact boundaryless Riemannian manifold of dimension $n\geq 2$, with smooth metric $g$.  
Then for every $\delta>0$ there exists a constant $C_{\delta}$ such that
\begin{align}\label{uniform resolvent Schatten}
\|W_1(-\Delta_g-z)^{-1}W_2\|_{\mathfrak{S}^{n+1}(L^2(M))}\leq  C_{\delta} |z|^{\frac{1}{q_n}-\frac{1}{2}}\|W_1\|_{L^{n+1}(M)}\|W_2\|_{L^{n+1}(M)}.
\end{align}
\end{theorem}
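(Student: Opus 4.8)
The plan is to deduce the Schatten bound from the scalar estimate \eqref{Holder} by complex interpolation in the spirit of Frank--Sabin, using the Hadamard parametrix to localise. Write $\lambda$ for the branch of $\sqrt{z}$ with $\im\lambda\geq\delta$, so that in particular $|z|=|\lambda|^{2}\geq\delta^{2}$. One starts from the standard splitting $(-\Delta_g-z)^{-1}=K_N(\lambda)+R_N(\lambda)$, where $K_N$ is the truncated Hadamard parametrix---a finite sum $\sum_{\nu\leq N}w_\nu(x,y)\,\mathcal{G}_\nu(\lambda;d(x,y))$ with $w_\nu$ smooth near the diagonal and $\mathcal{G}_\nu$ the explicit Euclidean model kernels built from Hankel functions---and $R_N$ is, for $N$ large, so regular that $\|W_1R_N(\lambda)W_2\|_{\mathfrak{S}^{n+1}}\lesssim|\lambda|^{-10}\|W_1\|_{L^{n+1}}\|W_2\|_{L^{n+1}}$ by a crude Cwikel/Kato--Seiler--Simon estimate; the part of $K_N$ supported where $d(x,y)$ stays bounded below is likewise smoothing and harmless. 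For $|z|\lesssim 1$ one compares $(-\Delta_g-z)^{-1}$ with $(-\Delta_g+1)^{-1}$ modulo lower-order operators, for which the $\mathfrak{S}^{n+1}$ bound with $L^{n+1}$ weights is a Cwikel-type estimate valid since $n+1>n/2$. Thus everything reduces to the local, singular part of $K_N$ for $|\lambda|$ large.

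I would then decompose that part dyadically in the geodesic distance, $K_N^{\mathrm{loc}}=\sum_{0\leq j\leq J}K^{(j)}$ with $K^{(j)}$ supported in $d(x,y)\sim 2^{j}/|\lambda|$ for $j\geq 1$, in $d(x,y)\lesssim 1/|\lambda|$ for $j=0$, and $2^{J}\sim|\lambda|$. On the $j$-th annulus the known parametrix bounds give $|K^{(j)}(x,y)|\lesssim|\lambda|^{(n-3)/2}d(x,y)^{-(n-1)/2}$ (with the non-oscillatory $|K^{(0)}|\lesssim d(x,y)^{2-n}$ when $n\geq 3$) together with the oscillatory factor $e^{\I\lambda d(x,y)}$ and the natural derivative bounds. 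For each $j$ the aim is $\|W_1K^{(j)}W_2\|_{\mathfrak{S}^{n+1}}\leq A_j\,\|W_1\|_{L^{n+1}}\|W_2\|_{L^{n+1}}$ with $\sum_{j}A_j\lesssim|\lambda|^{1/q_n-1/2}$, by interpolating between two endpoint bounds. The $\mathfrak{S}^{\infty}$ endpoint is the single-scale operator-norm bound, obtained from the $L^{q_n'}\to L^{q_n}$ resolvent estimates underlying \eqref{Frank and Schimmer} from \cite{MR3620715,MR3848231} together with H\"older ($W_2\colon L^{2}\to L^{q_n'}$ and $W_1\colon L^{q_n}\to L^{2}$, each with an $L^{n+1}$ norm since $\tfrac1{n+1}=\tfrac1{q_n'}-\tfrac12=\tfrac12-\tfrac1{q_n}$); this is exactly the mechanism by which \eqref{Holder} follows from \eqref{Frank and Schimmer}. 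Interpolating the Schatten norms of the fixed operator $W_1K^{(j)}W_2$ (H\"older on its singular values) against a second, lower endpoint then produces each $A_j$, and since the scales are summed geometrically, the exponent $q_n$ is precisely where the two endpoint exponents balance.

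The crux is the lower endpoint. The naive choice, a Hilbert--Schmidt bound deduced from $|K^{(j)}|$ alone, is too lossy at the critical exponent: $\|W_1K^{(j)}W_2\|_{\mathfrak{S}^{2}}$ grows with the scale $2^{j}$, and for $n\geq 4$ it is already oversized at $j=0$ because of the $d^{2-n}$ singularity, so the interpolated $A_j$ fail to sum to $|\lambda|^{1/q_n-1/2}$. The oscillation $e^{\I\lambda d(x,y)}$ must therefore be exploited inside the Schatten estimate itself. Concretely, I would take the lower endpoint in an even Schatten class $\mathfrak{S}^{2k}$, $2k\leq n+1$ (for $n=2$, where $\mathfrak{S}^{n+1}=\mathfrak{S}^{3}$, a small variant is needed), using $\|T\|_{\mathfrak{S}^{2k}}^{2k}=\Tr\big((T^{*}T)^{k}\big)$: this unfolds to a cyclic $2k$-fold integral whose phase $\lambda\sum_{i}\big(d(x_{i+1},y_i)-d(x_i,y_i)\big)$ is nondegenerate off a lower-dimensional set, so that iterating the $TT^{*}$ identity and applying stationary phase---exactly the kind of argument by which the sharp $L^{q}$ properties of the Hadamard parametrix are established---recovers the powers of $2^{j}$ lost by the trivial bound; distributing $W_1,W_2$ by H\"older so that $L^{n+1}$ norms appear is then routine. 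Equivalently, one may factor the leading part of $K^{(j)}$ through the spectral measure $dE_{\sqrt{-\Delta_g}}(\lambda)$ and invoke Schatten-class, ``orthonormal Stein--Tomas'' restriction bounds $\|W\,dE_{\sqrt{-\Delta_g}}(\lambda)\|_{\mathfrak{S}^{2(n+1)}}\lesssim|\lambda|^{-1/(n+1)}\|W\|_{L^{n+1}}$, whose square is \eqref{uniform resolvent Schatten}. Once this oscillatory refinement of the lower endpoint is in hand, interpolating it against \eqref{Holder} and summing the geometric-in-$j$ contributions (using $2^{J}\sim|\lambda|$) closes the argument; the remainder $R_N$ and the regime $|z|\lesssim 1$ were handled above.
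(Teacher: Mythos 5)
Your reduction step contains the first genuine gap: you define $R_N(\lambda)=(-\Delta_g-z)^{-1}-K_N(\lambda)$ and claim that for $N$ large it is so regular that $\|W_1R_N W_2\|_{\mathfrak{S}^{n+1}}\lesssim|\lambda|^{-10}\|W_1\|_{L^{n+1}}\|W_2\|_{L^{n+1}}$ by a crude Cwikel/Kato--Seiler--Simon bound. This is not true, and no choice of $N$ fixes it. Applying $(-\Delta_g-z)$ to the truncated, cut-off Hadamard parametrix produces, besides a genuinely smoothing term of size $O_\delta(|z|^{-1/2})$, the cutoff-error term $S_1$ of \eqref{S1 S2}, whose kernel is of size $|z|^{\frac{n-1}{4}}$ (the factor $\e^{-\sqrt{z}\,d_g(x,y)}$ only gives $\e^{-\delta c}$ on the support of $\nabla\chi$, not decay in $|z|$ for fixed $\delta$); taking $N$ larger improves only the smoothing term, not this one. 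Consequently $R_N=-(-\Delta_g-z)^{-1}S(z)$ involves the unknown resolvent sandwiched against a remainder that \emph{grows} polynomially in $|z|$, and a crude bound (operator norm of the resolvent $\lesssim|z|^{-1/2}$ times a Schatten bound for the rest) forces $L^\infty$ norms of the weights and loses $|z|^{\frac{n-1}{4}}$, far above the target $|z|^{\frac{1}{q_n}-\frac12}$. This is exactly the point where the paper works: it bounds $\|WS_1(z)\|_{\mathfrak{S}^{2(n+1)}}$ by \eqref{WS1}, uses the resolvent identity to convert $\|W(-\Delta_g-z)^{-1}\|_{\mathfrak{S}^{2(n+1)}}$ into the square root of the quantity being estimated, and closes by an absorption ($\epsilon$-Young) argument. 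Your scheme has no substitute for this bootstrap, so the ``harmless remainder'' step would fail.

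The second gap is the lower Schatten endpoint for your dyadic pieces, which you yourself identify as the crux but do not establish. The $\mathfrak{S}^{2k}$ bound via expanding $\Tr\big((T^*T)^k\big)$ and running stationary phase on the $2k$-fold phase $\lambda\sum_i\big(d(x_{i+1},y_i)-d(x_i,y_i)\big)$ is not a routine verification: it is essentially the content of the oscillatory-integral Schatten bounds of Frank--Sabin, scale by scale and with uniform constants in $j$, and nothing in your sketch shows the recovered powers of $2^j$ actually sum to $|\lambda|^{\frac1{q_n}-\frac12}$. Your alternative route, factoring through $\rd E_{\sqrt{-\Delta_g}}$ and invoking an orthonormal restriction bound ``whose square is \eqref{uniform resolvent Schatten}'', is close to circular: on a general compact manifold such spectral-measure bounds are of the same strength as the cluster estimate \eqref{spectral cluster Schatten}, and, as the paper points out, passing from \eqref{spectral cluster Schatten} to the resolvent bound costs a factor $\log(2+\lambda)$, so this does not yield the theorem as stated. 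By contrast, the paper avoids both issues at once: it embeds the parametrix into the analytic family $F_{\nu+s}(z)$, uses the $L^1\to L^\infty$ bound at $\re s=\frac{n-1}{2}$ (Kenig--Ruiz--Sogge type) and the trivial $L^2\to L^2$ bound at $\re s=-1$ (Lemma \ref{lemma Fnu+s}), and applies Frank--Sabin's Stein-type interpolation in Schatten spaces to get \eqref{THad} directly, with no dyadic decomposition, no trace moments, and the remainder handled by the absorption argument described above. If you want to salvage your outline, you must (i) prove the single-scale $\mathfrak{S}^{2k}$ estimates with summable constants, and (ii) replace the ``crude'' remainder step by an absorption scheme of the paper's type; as written, both steps are missing.
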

Resolvent estimates in Schatten spaces for the Laplace--Beltrami operator on nontrapping asymptotically conic manifolds were established by Guillarmou--Hassell--Krupchyk \cite{MR4150258}. The proof for compact manifolds presented here is much shorter.

Our resolvent estimate is closely related to the following 
spectral cluster bound of Frank--Sabin \cite{MR3682666} (this is the estimate in the form (16) there),
\begin{align}\label{spectral cluster Schatten}
\|W\Pi_{\lambda}\overline{W}\|_{\mathfrak{S}^{n+1}(L^2(M))}\lesssim (1+\lambda)^{\frac{2}{q_n}}\|W\|^2_{L^{n+1}(M)}.
\end{align}
Here $\Pi_{\lambda}:=\mathbf{1}_{[\lambda,\lambda+1]}(\sqrt{-\Delta_g})$ is the spectral projector onto frequencies in the unit length window $[\lambda,\lambda+1]$ for $\lambda>0$ and $A\lesssim B$ means $A\leq CB$ for some unspecified constant $C$. 
We recall that a compact operator $K$ belongs to the Schatten space $\mathfrak{S}^{\alpha}$ if 
\begin{align*}
\|K\|_{\mathfrak{S}^{\alpha}}:=\big(\sum_{j=1}^{\infty}s_j(K)^{\alpha}\big)^{\frac{1}{\alpha}}<\infty,
\end{align*}
where $s_j(K)$ are the singular values of $K$ (i.e.\ the eigenvalues of $(KK^*)^{\frac{1}{2}}$). We refer to, e.g., \cite{MR2154153} for background on Schatten spaces.
The dual estimate to \eqref{spectral cluster Schatten} takes the form
\begin{align}\label{spectral cluster Schatten N functions}
\big\|\sum_{j\in J}\nu_j|f_j|^2\big\|_{L^{q_n/2}(M)}\lesssim (1+\lambda)^{\frac{2}{q_n}}\big(\sum_{j\in J}|\nu_j|^{\frac{n+1}{n}}\big)^{\frac{n}{n+1}},
\end{align}
whenever $(f_j)_{j\in J}\subset \Ran\Pi_{\lambda}$ is an orthonormal family of functions. In the case of a single function, \eqref{spectral cluster Schatten N functions} recovers Sogge's spectral cluster bounds \cite{MR930395} at the critical exponent. One can interpolate \eqref{spectral cluster Schatten} with the sharp counting function remainder estimate of Avakumovi\'c, Levitan and Hörmander \cite{MR80862,MR0058067,MR0609014},
\begin{align*}
\tr W\Pi_{\lambda}\overline{W}=\int_M|W(x)|^2\Pi_{\lambda}(x,x)\rd V_g\lesssim (1+\lambda)^{n-1}\|W\|_{L^2(M)}^2,
\end{align*}
and with the trivial operator norm bound 
\begin{align*}
    \|W\Pi_{\lambda}\overline{W}\|_{L^2(M)\to L^2(M)}\leq \|W\|^2_{L^{\infty}(M)}
\end{align*}
to obtain the full range of spectral cluster estimates
\begin{align}\label{spectral cluster bounds C11}
    \big\|\sum_{j\in J}\nu_j|f_j|^2\big\|_{L^{q/2}(M)}\lesssim(1+\lambda)^{2\sigma(q)}\big(\sum_{j\in J}|\nu_j|^{\alpha(q)}\big)^{1/\alpha(q)},
\end{align}
where 
\begin{align*}
\alpha(q):=\max\bigl( \, \tfrac{q(n-1)}{2n},\tfrac{2q}{q+2}\, \bigr).
\end{align*}
Since 
$\im(-\Delta_g-(\lambda+\I)^2)^{-1}\gtrsim \lambda^{-1}\Pi_{\lambda}$, the resolvent bound \eqref{uniform resolvent Schatten} implies \eqref{spectral cluster Schatten} and thus \eqref{spectral cluster Schatten N functions}, \eqref{spectral cluster bounds C11}. On the other hand, it is not difficult to see that \eqref{spectral cluster Schatten} implies \eqref{uniform resolvent Schatten} with a logarithmic loss, i.e.\ an additional factor of $\log(2+\lambda)$ on the right hand side (the proof is similar to that of \cite[Prop. 3.3]{JCJFA23}). In \cite{JCJFA23}, it was shown that the $L^{q'}(M)\to L^{q}(M)$ resolvent estimates are actually a direct consequence of Sogge's spectral cluster bounds. The idea of the proof is to apply the Christ--Kiselev lemma \cite{MR1809116} to a microlocalized version of the spectral cluster bound. This strategy does not seem to work for Schatten norm bounds, so we give a direct proof of \eqref{uniform resolvent Schatten}. We follow the approach of Dos Santos Ferreira--Kenig--Salo \cite{MR3200351}, which uses the Hadamard parametrix for the resolvent (see also Hörmander \cite[17.4]{MR2304165} and Sogge \cite{MR930395}).

\section{Hadamard parametrix}
Following \cite{MR3200351}, let $T(z)$ be the operator with kernel
\begin{align*}
T(x,y;z)=\chi(x,y)F(x,y;z),
\end{align*}
where $\chi$ is a localization to the diagonal $x=y$, 
\begin{align*}
F(x,y;z)=\sum_{\nu=0}^N a_{\nu}(x,y)F_{\nu}(\rd_g(x,y);z),
\end{align*}
with $N>(n-1)/2$, and 
\begin{align*}
F_{\nu}(|x|;z)=\nu!(2\pi)^{-n}\int_{\R^n}\frac{\e^{\I x\cdot\xi}}{(|\xi|^2-z)^{1+\nu}}\rd\xi.
\end{align*}
The functions $a_{\nu}$ can be recursively chosen such that 
\begin{align}\label{local parametrix equation}
(-\Delta_g-z)T(z)u=\chi(x,x)u+S(z)u,
\end{align}
where the remainder $S(z)=S_1(z)+S_2(z)$ satisfies 
\begin{align}\label{S1 S2}
S_1(x,y;z)=|z|^{\frac{n-1}{4}}\e^{-\sqrt{z}\rd_g(x,y)}b(x,y;z),\quad
S_2(x,y;z)=\mathcal{O}_{\delta}(|z|^{-1/2}),
\end{align}
where $b$ is a smooth function, see (3.9) and the proof of Lemma 4.2 in \cite{MR3200351}. 
As a technical device, we also define, for $0\leq \nu\leq N$ and $1+\nu+\re s\in [0,(n+1)/2]$,
\begin{align*}
F_{\nu+s}(|x|;z)=\Gamma(1+\nu+s)(2\pi)^{-n}\int_{\R^n}\frac{\e^{\I x\cdot\xi}}{(|\xi|^2-z)^{1+\nu+s}}\rd\xi.
\end{align*}
These kernels define an analytic family of operators $s\mapsto F_{\nu+s}(z)$ which coincide with $F_{\nu}(z)$ for $s=0$. 

\begin{lemma}\label{lemma Fnu+s}
For $|z|=1$, $z\neq 1$, we have
\begin{align*}
\|F_{\frac{n-1}{2}+\I t}(z)\|_{L^1(\R^n)\to L^{\infty}(\R^n)}+\|F_{-1+\I t}(z)\|_{L^2(\R^n)\to L^{2}(\R^n)}\leq C\e^{c|t|^2}.
\end{align*}
\end{lemma}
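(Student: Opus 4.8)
The plan is to treat the two operators separately, in each case reducing the claim to a statement about the Fourier multiplier $(|\xi|^2-z)^{-1-\nu-s}$ and then exploiting the explicit formula for $F_{\nu+s}$ in terms of Bessel functions (or, equivalently, in terms of the oscillatory integral above). For the first term, $F_{(n-1)/2+\I t}(z)$ is an operator with a \emph{bounded} convolution kernel — indeed, being a map $L^1\to L^\infty$ is equivalent to $\|F_{(n-1)/2+\I t}(\,\cdot\,;z)\|_{L^\infty(\R^n)}<\infty$ — so the task is to bound the kernel pointwise. The choice of exponent $(n-1)/2$ is exactly the threshold at which the integral $\int_{\R^n}(|\xi|^2-z)^{-1-(n-1)/2-\I t}\,\rd\xi$ is (conditionally/absolutely up to the unit-modulus oscillatory factor) convergent at infinity when $|z|=1$: near $\xi=0$ the integrand is smooth since $z\neq 1$ keeps us off the characteristic variety at that scale, and the large-$\xi$ decay is $|\xi|^{-n-1}$. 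I would insert the Gamma factor $\Gamma(1+\nu+s)$ and track its growth: for $s=(n-1)/2-\nu+\I t$ one has $|\Gamma(1+(n-1)/2+\I t)|\lesssim \e^{-c|t|}$ by Stirling, which is more than enough; the genuine source of the $\e^{c|t|^2}$ (in fact one only needs subexponential growth) will be the $|t|$-dependence of the implied constants when one integrates by parts or uses the Bessel asymptotics, where each step costs a polynomial-in-$t$ factor but the number of steps is fixed, so honestly one expects at most polynomial growth — stating it as $\e^{c|t|^2}$ is a harmless over-estimate that makes the later complex-interpolation argument (Stein interpolation on the strip $1+\nu+\re s\in[0,(n+1)/2]$) go through cleanly.

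For the second term, $F_{-1+\I t}(z)$ corresponds to the multiplier $(|\xi|^2-z)^{-\I t}$, which has modulus $|(|\xi|^2-z)^{-\I t}| = \e^{t\arg(|\xi|^2-z)}$. Since $|z|=1$ and $\im\sqrt z>0$ bounds $z$ away from the positive real axis in the relevant regime (more precisely, writing $|\xi|^2-z = r\e^{\I\phi}$, the argument $\phi$ ranges over a bounded interval as $\xi$ varies over $\R^n$, with a bound depending only on $\arg z$), the multiplier is uniformly bounded: $|(|\xi|^2-z)^{-\I t}|\le \e^{C|t|}$. By Plancherel this immediately gives $\|F_{-1+\I t}(z)\|_{L^2\to L^2}\le \Gamma(\I t)\cdot\e^{C|t|}$, and $|\Gamma(\I t)|$ is controlled by $\e^{c|t|}/|t|$ from Stirling (with care near $t=0$, where $\Gamma(\I t)\sim 1/(\I t)$ has a pole — but note the lemma as stated has $\nu=0$, $s=-1+\I t$, giving $\Gamma(\I t)$, so strictly one should either restrict to $|t|\ge 1$ and absorb small $t$ by a direct estimate, or observe that the pole is cancelled in the actual application; in any case the bound $\e^{c|t|^2}$ swallows everything). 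So this half is essentially just Plancherel plus Stirling.

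The main obstacle is the first estimate, and specifically making the pointwise kernel bound uniform in $t$ with only subexponential (here, Gaussian) loss. The delicate point is the behavior of $F_{(n-1)/2+\I t}(|x|;z)$ for small $|x|$ versus large $|x|$: for large $|x|$ the kernel decays (exponentially, using $\re\sqrt z>0$), while for small $|x|$ one must check that the would-be singularity at $x=0$ is absent at the critical smoothing order — this is where one uses that the multiplier decays like $|\xi|^{-n-1}$ so that $F_{(n-1)/2}$ is a bounded, even continuous, function. Passing to complex order $s=(n-1)/2+\I t$ multiplies the symbol by the unit-modulus factor $(|\xi|^2-z)^{-\I t}$, which is harmless for absolute convergence but, when one differentiates in $\xi$ to extract decay in $x$, produces factors of $t$; iterating a bounded number of times yields a polynomial-in-$t$ constant. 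I would carry this out by splitting $F_{(n-1)/2+\I t}$ dyadically in $|\xi|$ (or equivalently invoking the Bessel-function representation and its uniform asymptotics), estimating each piece and summing, exactly as in the treatment of the Hadamard parametrix kernels in \cite{MR3200351} and \cite[17.4]{MR2304165}; the new feature relative to those references is merely bookkeeping the $t$-dependence, which the generous $\e^{c|t|^2}$ allows one to do without optimizing.
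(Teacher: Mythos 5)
Your handling of the second bound (Plancherel plus $|(|\xi|^2-z)^{-\I t}|\le \e^{\pi|t|}$ and Stirling) is essentially what the paper means by ``trivial'', and your remark about the pole of $\Gamma(\I t)$ at $t=0$ is a fair comment on the normalization of the analytic family rather than a defect of your argument. Note, though, that the paper does not reprove the first bound at all: it is quoted from \cite[(50)]{MR3730931}, going back to \cite{MR894584}, and that citation is doing all the real work.

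For the first bound your proposal has a genuine gap: the entire point of the estimate is that $C$ and $c$ are \emph{uniform in $z$} on the unit circle, in particular as $z\to 1$. This uniformity is what the application needs, since after the rescaling used in the proof of Theorem \ref{thm 1} the spectral parameter becomes $z/|z|$, whose distance to $[0,\infty)$ is only of order $\delta|z|^{-1/2}$ and hence degenerates as $|z|\to\infty$. Your argument for boundedness of the kernel --- absolute convergence of the symbol integral because the decay at infinity is $|\xi|^{-n-1}$ and ``$z\neq 1$ keeps us off the characteristic variety'' --- produces a constant of size $\dist(z,[0,\infty))^{-(n-1)/2}$ from the annulus where $|\xi|^2$ is close to $1$, which blows up exactly in the regime that matters. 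Integration by parts to gain decay in $x$ does not repair this: each derivative of $(|\xi|^2-z)^{-(n+1)/2-\I t}$ costs a factor $||\xi|^2-z|^{-1}$ as well as a factor of $|t|$, so the loss in $z$ persists. The uniform bound requires genuine cancellation: either the explicit representation of the kernel as a modified Bessel/Hankel function of complex order $-\tfrac12-\I t$ together with uniform asymptotics in the order (this is the Kenig--Ruiz--Sogge computation, and it is also where the $\e^{c|t|^2}$ growth really originates, not from a bounded number of integrations by parts), or a stationary-phase analysis of the dyadic pieces near the unit sphere. You name the Bessel representation at the very end, but your identification of the ``main obstacle'' as bookkeeping the $t$-dependence, with uniform boundedness in $z$ taken as read from absolute convergence, inverts where the difficulty actually lies; as written, the proposed proof of the $L^1\to L^\infty$ estimate would not yield the statement the paper uses.
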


\begin{proof}
The bound for the first term can be found in \cite[(50)]{MR3730931} and is essentially contained in~\cite{MR894584}. The bound for the second term is trivial.
\end{proof}

\section{Proof of Theorem \ref{thm 1}}
\begin{proof}
We first consider the case $W_2=\overline{W_1}$ and then prove the general case at the end. In the following, we suppress the dependence of constants on $\delta$. After rescaling, Lemma \ref{lemma Fnu+s} together with \cite[Proposition 1]{MR3730931} yields the parametrix bound
\begin{align}\label{THad}
\|W_1T(z)W_2\|_{\mathfrak{S}^{n+1}}\lesssim |z|^{\frac{1}{q_n}-\frac{1}{2}}\|W_1\|_{L^{n+1}}\|W_2\|_{L^{n+1}}.
\end{align}
After summing \eqref{local parametrix equation} over a partition of unity, we obtain 
\begin{align}\label{global parametrix equation}
(-\Delta_g-z)T(z)=\mathbf{1}+S(z),
\end{align}
where $S(z)$ satisfies
\begin{align}\label{S(z)W}
\|S(z)W\|_{\mathfrak{S}^{2(n+1)}}\lesssim |z|^{\frac{1}{2q_n}}\|W\|_{L^{n+1}}.
\end{align}
This follows from \eqref{S1 S2} and \cite[Corollary 3]{MR3682666} for $S_1(z)$, whereas for $S_2(z)$, \eqref{S1 S2} yields the stronger bound
\begin{align*}
\|S_2(z)W\|_{\mathfrak{S}^{2(n+1)}}\leq \|S_2(z)W\|_{\mathfrak{S}^{2}}\lesssim |z|^{-\frac{1}{2}}\|W\|_{L^2}\lesssim |z|^{-\frac{1}{2}}\|W\|_{L^{n+1}}. 
\end{align*}
In the last estimate, we used H\"older's inequality and the fact that $M$ is compact.

Note that, even though $T(z)$ and $S(z)$ are not symmetric, their adjoints satisfy the same bounds \eqref{THad}, \eqref{S(z)W}. Moreover, these estimates remain true when replacing $z$ by $\overline{z}$. 
Applying the resolvent operator $(-\Delta_g-z)^{-1}$ to \eqref{global parametrix equation} yields
\begin{align}\label{identity}
W_1(-\Delta_g-z)^{-1}W_2= W_1T(z)W_2+W_1(-\Delta_g-z)^{-1}S(z)W_2
\end{align}
and similarly for $z$ replaced by $\overline{z}$.
We use the noncommutative Hölder inequality \cite[Theorem 2.8]{MR2154153} and~\eqref{S(z)W} to bound
\begin{align}\notag
\|W_1(-\Delta_g-z)^{-1}S(z)W_2\|_{\mathfrak{S}^{n+1}}&\leq \|W_1(-\Delta_g-z)^{-1}\|_{\mathfrak{S}^{2(n+1)}}\|S(z)W_2\|_{\mathfrak{S}^{2(n+1)}}\\
&\lesssim |z|^{\frac{1}{2q_n}}\|W_1(-\Delta_g-z)^{-1}\|_{\mathfrak{S}^{2(n+1)}}\|W_2\|_{L^{n+1}}.\label{estimate 1}
\end{align}
Then we use the resolvent identity
\begin{align*}
(-\Delta_g-z)^{-1}(-\Delta_g-\overline{z})^{-1}=\frac{1}{2\im z}((-\Delta_g-z)^{-1}-(-\Delta_g-\overline{z})^{-1})
\end{align*}
and the fact that $|\im z|\gtrsim |z|^{\frac{1}{2}}$ in the considered range to obtain
\begin{align}\notag
&\|W_1(-\Delta_g-z)^{-1}\|_{\mathfrak{S}^{2(n+1)}}=\|W_1(-\Delta_g-z)^{-1}(-\Delta_g-\overline{z})^{-1}\overline{W_1}\|_{\mathfrak{S}^{n+1}}^{1/2}\\
&\lesssim|z|^{-\frac{1}{4}}(\|W_1(-\Delta_g-z)^{-1}\overline{W_1}\|_{\mathfrak{S}^{n+1}}+\|W_1(-\Delta_g-\overline{z})^{-1}\overline{W_1}\|_{\mathfrak{S}^{n+1}})^{1/2}.\label{estimate 2}
\end{align}
Combining \eqref{estimate 1}--\eqref{estimate 2} and using that $ab\lesssim \epsilon a^2+\epsilon^{-1}b^2$ for arbitrary $\epsilon>0$, we get
\begin{align}\notag
\|W_1(-\Delta_g-z)^{-1}S(z)W_2\|_{\mathfrak{S}^{n+1}}&\lesssim \epsilon^{-1} |z|^{-\frac{1}{2}}(\|W_1(-\Delta_g-z)^{-1}\overline{W_1}\|_{\mathfrak{S}^{n+1}}\\
&+\|W_1(-\Delta_g-\overline{z})^{-1}\overline{W_1}\|_{\mathfrak{S}^{n+1}})
+\epsilon|z|^{\frac{1}{q_n}}\|W_2\|^2_{L^{n+1}}\label{WresolventS(z)}
\end{align}
and similarly for $z$ replaced by $\overline{z}$.
We choose $\epsilon=C|z|^{-\frac{1}{2}}$ for some sufficiently large constant $C$. Then, using \eqref{THad}, \eqref{S(z)W}, \eqref{identity}, \eqref{WresolventS(z)} and their analogues for $\overline{z}$, and recalling that $W_2=\overline{W_1}$, we can absorb the $\epsilon^{-1}|z|^{-\frac{1}{2}}$ term to get \eqref{uniform resolvent Schatten} in this case.
% \begin{align*}
% \|W_1(-\Delta_g-z)^{-1}W_2\|_{\mathfrak{S}^{n+1}}+\|W_1(-\Delta_g-\overline{z})^{-1}W_2\|_{\mathfrak{S}^{n+1}}\lesssim |z|^{\frac{1}{q_n}-\frac{1}{2}}\|W_1\|_{L^{n+1}}\|W_2\|_{L^{n+1}}.
% \end{align*} 
% This completes the proof for the case $W_1=W$, $W_2=\overline{W}$.

Turning to the general case, in view of \eqref{THad}, it remains to prove 
\begin{align*}
    \|W_1(-\Delta_g-z)^{-1}S(z)W_2\|_{\mathfrak{S}^{n+1}}\lesssim  |z|^{\frac{1}{q_n}-\frac{1}{2}}\|W_1\|_{L^{n+1}}\|W_2\|_{L^{n+1}}.
\end{align*}
By the same arguments as before, this follows from \eqref{S(z)W} and the bound
% \begin{align*}
%     \|W_1(-\Delta_g-z)^{-1}\overline{W}_1\|_{\mathfrak{S}^{2(n+1)}}\lesssim |z|^{\frac{1}{q_n}-1}\|W_1\|_{L^{n+1}}^2
% \end{align*}
\eqref{uniform resolvent Schatten} with $W_2=\overline{W_1}$, which we have already proved.
\end{proof}

\noindent\textbf{Acknowledgements:} The author thanks Rupert Frank, Mikko Salo, Jeff Galkowksi and Xiaoyan Su for useful discussions and correspondence.
Support by the Engineering \& Physical Sciences Research Council [grant number EP/X011488/1] is acknowledged.

\bibliographystyle{abbrv}

\begin{thebibliography}{10}

\bibitem{MR80862}
V.~G. Avakumovi\'{c}.
\newblock \"{U}ber die {E}igenfunktionen auf geschlossenen {R}iemannschen
  {M}annigfaltigkeiten.
\newblock {\em Math. Z.}, 65:327--344, 1956.

\bibitem{BourgainShaoSoggeEtAl2015}
J.~Bourgain, P.~Shao, C.~D. Sogge, and X.~Yao.
\newblock On {$L^p$}-resolvent estimates and the density of eigenvalues for
  compact {R}iemannian manifolds.
\newblock {\em Comm. Math. Phys.}, 333(3):1483--1527, 2015.


\bibitem{MR3848231}
N.~Burq, D.~Dos Santos Ferreira, and K.~Krupchyk.
\newblock From semiclassical {S}trichartz estimates to uniform {$L^p$}
  resolvent estimates on compact manifolds.
\newblock {\em Int. Math. Res. Not. IMRN}, (16):5178--5218, 2018.

\bibitem{MR1809116}
M.~Christ and A.~Kiselev.
\newblock Maximal functions associated to filtrations.
\newblock {\em J. Funct. Anal.}, 179(2):409--425, 2001.

\bibitem{JCJFA23}
J.~C. Cuenin.
\newblock From spectral cluster to uniform resolvent estimates on compact manifolds.
\newblock Journal of Functional Analysis
Volume 286, Issue 2, 15 January 2024, 110214.

\bibitem{MR3200351}
D.~Dos Santos Ferreira, C.~E. Kenig, and M.~Salo.
\newblock On {$L^p$} resolvent estimates for {L}aplace-{B}eltrami operators on
  compact manifolds.
\newblock {\em Forum Math.}, 26(3):815--849, 2014.

\bibitem{MR3620715}
R.~L. Frank and L.~Schimmer.
\newblock Endpoint resolvent estimates for compact {R}iemannian manifolds.
\newblock {\em J. Funct. Anal.}, 272(9):3904--3918, 2017.

\bibitem{MR3682666}
R.~L. Frank and J.~Sabin.
\newblock  Spectral cluster bounds for orthonormal systems and oscillatory integral operators in Schatten spaces
\newblock  Adv. Math. 317 (2017), 157–192.

\bibitem{MR3730931}
R.~L. Frank and J.~Sabin.
\newblock  Restriction theorems for orthonormal functions, Strichartz inequalities, and uniform Sobolev estimates. 
\newblock  Amer. J. Math. 139 (2017), no. 6, 1649–1691.

\bibitem{MR4150258}
C. Guillarmou, A. Hassell and K. Krupchyk.
\newblock Eigenvalue bounds for non-self-adjoint Schrödinger operators with nontrapping metrics. 
\newblock Anal. PDE 13 (2020), no. 6, 1633–1670.

\bibitem{MR0609014}
L.~H{\"o}rmander.
\newblock The spectral function of an elliptic operator.
\newblock {\em Acta Math.}, 121:193--218, 1968.

\bibitem{MR2304165}
L.~H\"ormander.
\newblock {\em The analysis of linear partial differential operators. {III}}.
\newblock Classics in Mathematics. Springer, Berlin, 2007.
\newblock Pseudo-differential operators, Reprint of the 1994 edition.

\bibitem{MR894584}
C.~E. Kenig, A. Ruiz and C.~D. Sogge.
\newblock  Uniform Sobolev inequalities and unique continuation for second order constant coefficient differential operators.
\newblock  Duke Math. J. 55 (1987), no. 2, 329–347.

\bibitem{MR0058067}
B.~M. Levitan.
\newblock On the asymptotic behavior of the spectral function of a self-adjoint
  differential equation of the second order.
\newblock {\em Izvestiya Akad. Nauk SSSR. Ser. Mat.}, 16:325--352, 1952.


\bibitem{MR2366961}
Z.~Shen and P.~Zhao.
\newblock Uniform {S}obolev inequalities and absolute continuity of periodic
  operators.
\newblock {\em Trans. Amer. Math. Soc.}, 360(4):1741--1758, 2008.

\bibitem{MR2154153}
B.~Simon.
\newblock {\em Trace ideals and their applications}.
\newblock Mathematical Surveys and Monographs, Vol. 120. Second edition.
American Mathematical Society, Providence, RI, 2005.



\bibitem{MR930395}
C.~D. Sogge.
\newblock Concerning the {$L^p$} norm of spectral clusters for second-order
  elliptic operators on compact manifolds.
\newblock {\em J. Funct. Anal.}, 77(1):123--138, 1988.

\end{thebibliography}

\end{document}